\makeatletter\@addtoreset{equation}{section}\makeatother
\makeatletter\@addtoreset{subsection}{equation}\makeatother
\newcommand{\symbitem}[1]{\item[#1]%
\renewcommand{\@currentlabel}{#1}\ignorespaces}
\newcounter{todocounter}
\DeclareDocumentCommand\addreference{g}{\stepcounter{todocounter}\todo[color
= blue!30]{\thetodocounter. Add reference\IfNoValueF{#1}{:
#1}}\xspace}
\DeclareDocumentCommand\checkthis{g}{\stepcounter{todocounter}\todo[color
= red!50]{\thetodocounter. Check this\IfNoValueF{#1}{:
#1}}\xspace}
\DeclareDocumentCommand\fixthis{g}{\stepcounter{todocounter}\todo[color
= orange!50]{\thetodocounter. Fix this\IfNoValueF{#1}{:
#1}}\xspace}
\DeclareDocumentCommand\expand{g}{\stepcounter{todocounter}\todo[color
= green!50]{\thetodocounter. Expand\IfNoValueF{#1}{: #1}}\xspace}
\newtheorem{theorem}[equation]{Theorem}
\newtheorem{lemma}[equation]{Lemma}
\newtheorem{corollary}[equation]{Corollary}
\newtheorem{proposition}[equation]{Proposition}
\newtheorem*{theorem*}{Theorem}
\newtheorem*{lemma*}{Lemma}
\newtheorem*{corollary*}{Corollary}
\newtheorem*{proposition*}{Proposition}
\theoremstyle{remark}
\newtheorem{remark}[equation]{Remark}
\newtheorem*{remark*}{Remark}
\theoremstyle{definition}
\newtheorem{example}[equation]{Example}
\def\gitfootnote{\gdef\@thefnmark{}\@footnotetext}
\mathchardef\mhyphen="2D
\def \CC {\mathcal{C}}  
\def \OO {\mathcal{O}}  
\def \TT {\mathcal{T}}  
\def \VV {\mathcal{V}}      
\def \VVo {\overline{\VV}}  
\def \B {\mathbb{B}}    
\def \C {\mathbb{C}}    
\def \P {\mathbb{P}}    
\def \Q {\mathbb{Q}}    
\def \R {\mathbb{R}}    
\def \Z {\mathbb{Z}}    
\newcommand{\lra}{\longrightarrow}
\DeclareMathOperator{\Coh}{Coh} \DeclareMathOperator{\Hom}{Hom}
\DeclareMathOperator{\Pic}{Pic} \DeclareMathOperator{\Spec}{Spec}
\DeclareMathOperator{\Tr}{Tr}
\providecommand{\arxiv}[1]{\href{http://arxiv.org/abs/#1}{arXiv:#1}}
\begin{document}
\title{On automorphic forms of small weight for fake projective planes}
\author{Sergey Galkin}
\address{PUC-Rio, Departamento de Matem\'atica \\
Rua Marqu\^es de S\~ao Vicente 225, G\'avea, Rio de Janeiro}
\address{HSE University}
\address{Independent University of Moscow}
\email{hirzebruch-3594@galkin.org.ru}
\author{Ilya Karzhemanov}
\address{Laboratory of AGHA, Moscow Institute of Physics and Technology \\
9 Institutskiy per., Dolgoprudny, Moscow Region, 141701, Russia}
\email{karzhemanov.iv@mipt.ru}
\author{Evgeny Shinder}
\address{School of Mathematics and Statistics \\
University of Sheffield \\
The Hicks Building, Hounsfield Road, Sheffield, S3 7RH, United
Kingdom}
\address{HSE University}
\email{e.shinder@sheffield.ac.uk}

\subjclass[2010]{14J29, 32N15, 14F05} \keywords{Fake projective
planes, automorphic forms, exceptional collections}

\thanks{
S.\,G. was supported in part by the Simons Foundation (Simons-IUM
fellowship), the Dynasty Foundation and RFBR (research project
~15-51-50045-a). The article was prepared within the framework of
a subsidy granted to the HSE by the Government of the Russian
Federation for the implementation of the Global Competitiveness
Program (RSF grant, project 14-21-00053 dated 11.08.14). I.\,K.
was supported by World Premier International Research Initiative
(WPI), MEXT, Japan, Grant-in-Aid for Scientific Research
(26887009) from Japan Mathematical Society (Kakenhi), and by the
Russian Academic Excellence Project ``5 -- 100''. We would like to
thank CIRM, Universit\`a degli Studi di Trento, and Kavli IPMU for
hospitality and excellent working conditions. Main body of this
work was done in August 2014, as a part of our ``Research in
Pairs'' at CIRM, Trento (Italy). We are also grateful to the
anonymous referee for valuable remarks.}

\date{\today}

\begin{abstract}
On the projective plane there is a unique cubic root of the
canonical bundle and this root is acyclic. On fake projective
planes such root exists and is unique if there are no 3-torsion
divisors (and usually exists, but not unique, otherwise). Earlier
we conjectured that any such cubic root must be acyclic. In the
present note we give two short proofs of this statement and show
acyclicity of some other line bundles on the fake projective
planes with at least $9$ automorphisms. Similarly to our earlier
work we employ simple representation theory for non-abelian finite
groups. The first proof is based on the observation that if some
line bundle is non-linearizable with respect to a finite abelian
group, then it should be linearized by a finite,
\emph{non-abelian}, Heisenberg group. For the second proof, we
also demonstrate vanishing of odd Betti numbers for a class of
abelian covers, and use linearization of an auxiliary line bundle
as well.
\end{abstract}

\maketitle


\setcounter{tocdepth}{1} \tableofcontents

\section*{Introduction}
\label{section:int}

\subsection{An overview}
\label{subsection:int-1} In this work we take another step towards
the categorification of \emph{Hirzebruch Proportionality
Principle} \cite{Hirzebruch58,Ise64,Hirzebruch66,Kobayashi-Ono}.
Let $X$ be a compact complex manifold uniformized by a bounded
Hermitian symmetric domain $D_- := K\backslash G_-$, so that $X =
D_-\slash\Gamma = K\backslash G_-/\Gamma$, where $\Gamma \simeq
\pi_1(X)$ and $K$ is the maximal compact subgroup in a Lie group
$G_-$. Let $G_+$ be a compact group that shares the same
complexification $G_c$ with $G_-$. Consider a compact Hermitian
symmetric space $F := D_+ = K\backslash G_+$ dual to $D_-$ in the
sense of E.\,Cartan. A remarkable fact, discovered by F.
Hirzebruch \cite{Hirzebruch58} around 1956, says that the classes
of $X$ and $F$ are proportional in Thom's bordism ring, with
proportionality constant being the Todd genus $a :=
\chi(X,\OO_X)$. In particular, $X$ and $F$ have the same slope
$\nu := \frac{\int c_2 c_1^{d-2}}{\int c_1^d}$, with non-zero
denominator because $K_X$ and $-K_F$ are ample. For example, all
compact manifolds uniformizable by the $d$-dimensional complex
ball $\B^d := \{ (z_1,\dots,z_d)\in\C^d\ \big\vert\ \sum |z_i|^2 <
1 \}$ are proportional to the complex projective space $\P^d$ in
this sense, and have slope $\nu = \frac{d}{2(d+1)}$ \footnote{~On
the other hand, for any $d$-dimensional compact $X$ with ample
$K_X$ S.-T. Yau demonstrated in \cite{Yau77} (as a corollary from
the Aubin -- Yau construction of the K\"ahler -- Einstein metric)
that the slope of $X$ is bounded by the slope of $\P^d$, and the
equality holds iff $X$ is uniformizable by $\B^d$.}, whereas all
compact manifolds uniformizable by the poly-disc $(\B^1)^d$ are
proportional to the Segre variety $(\P^1)^d$ and have slope
$\nu=\frac{1}{2}$. From his Proportionality Principle and the
Hirzebruch -- Riemann -- Roch theorem Hirzebruch deduced that
$\chi(X,\omega_X^{\otimes w}) = a\cdot\chi(F,\omega_F^{\otimes
w})$ for all integers $w$. \footnote{~Note that for
$X=\B^d/\Gamma$ this gives $\chi(X,\omega_X^{\otimes w}) =
\chi(X,\OO_X) \binom{d-w(d+1)}{d}$.} One computes the Euler
characteristic $\chi(X,L)$ of any line bundle $L\in\Pic X$,
numerically proportional to $K_X$ (that is $L^{\otimes q} \simeq
\omega_X^{\otimes p}$ for $w := \frac{p}{q}\in\Q$), in a similar
way.
Let us say then that a global section $0\neq s \in H^0(X,L)$ is a
\emph{modular form of large weight} if $w>1$, a \emph{modular form
of canonical weight} if $w=1$, and a \emph{modular form of small
weight} (or \emph{theta-characteristic}) if $0<w<1$. \footnote{~To
keep our notation compatible with the classical one-dimensional
case one could also say that $s\in H^0(X,L)$ has weight $w(d+1)
\in \Z$.} For $w > 1$, the Euler characteristic coincides with
$h^0(X,\omega_X^{\otimes w})$, thanks to the Kodaira vanishing.
For theta-characteristics, the Euler characteristic vanishes and
for the canonical weight it equals $1$, but there is no \emph{a
priori} vanishing and we know very little about $h^k(X,L)$ in the
dimension $d>1$. In \cite{Ise64}, M. Ise generalized Hirzebruch's
Proportionality to vector bundles of higher rank: to every
representation $\rho : K\to GL(r,\C)$ he associated a vector
bundle $E_X^\rho$ on $X$ and a homogeneous vector bundle
$E_F^\rho$ on $F$ (both of rank $r$) such that $\chi(X,E_X^\rho) =
a\cdot\chi(F,E_F^\rho)$, where the number $\chi(F,E^\rho)$ can be
computed via Borel -- Weil -- Bott theorem. R. Langlands has
independently obtained these formulae by different method (see
\cite{Langlands63}).

The preceding discussion was summarized in \cite{Hirzebruch66}
(cf. \cite[Section \textbf{17}]{Hirzebruch66}) as follows:

\begin{quote}
\emph{Given a cohomology theory $H^{\bullet}$ over $\Q$, admitting
the Poincar\'e duality and K\"unneth formula, there exists an
injective ring homomorphism $H^{\bullet}(F,\Q) \to
H^{\bullet}(X,\Q)$. Moreover, this homomorphism is given by an
element $\mathcal{K} \in H^{\bullet}(X \times F)$ via
$p_{1*}p_2^*\mathcal{K}$ for the projections $p_i$ onto $X$ and
$F$, respectively.}
\end{quote}

In \cite{Mumford79}, D. Mumford gave an ingenious construction of
\emph{a smooth complex projective surface with $K$ ample, $K^2=9$,
$p_g=q=0$}. All such surfaces are now known under the name of
\emph{fake projective planes}. They have been recently classified
into $100$ isomorphism classes by G. Prasad, S.-K. Yeung
\cite{Prasad-Yeung} and D. Cartwright, T. Steger
\cite{Cartwright-Steger}. According to \cite{Yau77} universal
cover of any fake projective plane is the complex ball and the
papers \cite{Prasad-Yeung,Cartwright-Steger,CScode} explicitly
describe all subgroups in the automorphism group of the ball which
are the fundamental groups of fake projective planes (cf.
\cite{Ishida88} and \cite{Kato-Ochiai}). However, these surfaces
are poorly understood from the algebro-geometric perspective,
since the uniformization maps (both complex and, as in the
Mumford's case, $2$-adic) are highly transcendental. Most notably,
the S. Bloch's conjecture on zero-cycles \cite{Bloch80} is not
established yet for \emph{any} fake projective plane, and we refer
to \cite{Barlow} for a compatible result for the Mumford's
surface. \footnote{~Recently L. Borisov and J. Keum
\cite{Borisov-Keum} have given the first explicit
\emph{algebro-geometric} construction of a fake projective plane
(see also \cite{Borisov-Fatighenti} and \cite{BBF}).}

Earlier we have initiated the study of fake projective planes from
the homological algebra perspective (see \cite{GKMS-mp,GKMS}).
Namely, for $\P^2$ the corresponding bounded derived category of
coherent sheaves has a semi-orthogonal decomposition, $D^b(\P^2) =
\left<\OO, \OO(1), \OO(2)\right>$, as was shown by A. Beilinson in
\cite{Beilinson78}. The ``easy'' part of his argument was in
checking that the line bundles $\OO(1)$ and $\OO(2)$ are
acyclic,\footnote{~To distinguish the class of acyclic objects
\emph{with vanishing global sections} \cite{ABKW} proposed the
term \emph{immaculate}.} and also that any line bundle on $\P^2$
is exceptional. All these results follow from Serre's computation.
In turn, the ``hard'' part consisted of checking that $\OO$,
$\OO(1)$, $\OO(2)$ actually generate $D^b(\P^2)$, which is
equivalent to the existence of \emph{Beilinson's spectral
sequence} --- resolution for the structure sheaf of the diagonal
in $\P^2\times\P^2$ with terms of the form $A \boxtimes B$ (hence,
in particular, it corresponds to an object in $D^b(\P^2 \times
\P^2)$). However, as follows from \cite{GKMS}, for fake projective
planes one can not construct a full exceptional collection this
way. But still one can define analogues of $\OO(1),\OO(2)$ for
some of these surfaces (see Section~\ref{section:pre} below) and
try to establish the ``easy part'' for them. Then exceptionality
of line bundles is equivalent to the vanishing of $h^{0,1}$ and
$h^{0,2}$, which is clear due to $q = p_g = 0$, while
acyclicity/immaculacy is not at all obvious.

We are going to treat acyclicity/immaculacy problem, more
generally, in the context of ball quotients and modular forms. Our
argument proceeds by proving the absence of modular forms of small
weights on complex balls (compare with \cite{Ise64, Weissauer83}).
Some categorical aspects of this approach are discussed in
Remark~\ref{proportionality} at the end of this section.
\refstepcounter{equation}
\subsection{Structure of the paper}
\label{subsection:int-2}

In Section~\ref{section:betti}, we prove Lemma~\ref{lemma:betti}
about vanishing of some Betti numbers $b_{2l+1}(X)$ for a class of
compact K\"ahler manifolds $X$, which includes the abelian covers
of fake projective spaces, minifolds (see \cite{GKMS-mp}), and
many other examples.

Further, in Section~\ref{section:weight3} we show how the
vanishing $b_1(X) = 0$ implies regularity of the universal abelian
covers of fake projective planes, and thus we deduce the
uniqueness of modular forms of canonical weight in
Theorem~\ref{theorem:weight3}. This result is \emph{a priori}: its
proof only uses the definition of fake projective planes but not
their classification.

The rest of the paper studies fake projective planes $S$ with a
faithful action of the automorphism group $A_S$ of order $\ge 9$.

Section~\ref{section:pre} sets up the equivariant scene. We recall
the construction of tautological line bundle $\OO_S(1)$ on $S$ and
discuss how its existence and linearizability (with respect to the
automorphism group) are related to the liftings of the fundamental
group of $S$ to $SU(2,1)$. We also show that a non-linearizable
faithful action of the abelian group $(\Z/3)^2$ gives rise to a
linearizable non-faithful action of its central extension $G$ ---
the \emph{Weyl -- Heisenberg group} $H_3$. Finally, we recall the
holomorphic Lefschetz-type formula of Atiyah -- Bott, which is an
indispensable tool for explicit computations with equivariant
sheaves.

In Section~\ref{section:weight-large}, we compute the characters
of spaces of modular forms of large weight considered as
$G$-representations (cf. Section {\ref{subsection:pre-1-a}}
below), and in Section~\ref{section:weight2} we deduce the absence
of modular forms of small weight from the computations for large
weight.

\refstepcounter{equation}
\subsection{Methods, arguments, and relation with other works}
\label{subsection:int-3}

The proof of Lemma~\ref{lemma:betti} recycles a trick from the
proofs of \cite[Lemma A.4]{Galkin-Iritani}, \cite[Lemma
4]{Stover13}, \cite[Proposition 1.2]{HMT}, \cite[Lemma
2.14]{Klingler03} (note that it can be generalized to actions of
abelian groups on Hodge -- Lefschetz -- Frobenius
super-commutative algebras).

Our lemma is complementary to various vanishing results and has a
peculiar domain of applicability that includes some ramified and
unramified abelian covers. One can obtain even stronger results
(by using variants of Lefschetz hyperplane theorem) in the case of
\emph{totally ramified} cyclic covers. For example, applying
Nori's variant of weak Lefschetz theorem, V. Kharlamov and V.
Kulikov prove the following:

\begin{theorem*}[see {\cite[Proposition 1]{Kharlamov-Kulikov14}}]
\label{theorem-kk14} If $f: Y \longrightarrow X$ is a finite
cyclic cover of smooth complex surfaces, totally ramified over a
smooth irreducible curve $B\subset X$ with $B^2>0$, then the
induced homomorphism $f_*:\pi_1(Y)\to\pi_1(X)$ is an isomorphism.
\footnote{~Similar result extends to higher-dimensional $X$ and
$Y$.}
\end{theorem*}

Apart from Lemma~\ref{lemma:betti} we know of two other approaches
which in some cases prove the vanishing of $b_1$ for ball
quotients: these are the theorem of J. Rogawski
\cite{Rogawski90,Blasius-Rogawski} for congruence subgroups and D.
Kazhdan's property (T) \cite{Kazhdan67}, as used originally by D.
Mumford \cite{Mumford79}, M.-N. Ishida \cite{Ishida88}, and
subsequently by N. Fakhruddin \cite{Fakhruddin15}.

\begin{theorem*}[Rogawski's vanishing, {see \cite[Theorem 15.\,3.\,1]{Rogawski90}, \cite{Blasius-Rogawski}}]
\label{theorem-rog} Let $\Pi$ be a torsion-free cocompact
arithmetic subgroup in $PU(2,1)$ \emph{of type II} (see
\cite{Klingler03,Yeung04}). For $S = \B/\Pi$, if $\Pi$ is a
\emph{congruence subgroup}, then $b_1(S) = 0$ and the Picard
number of $S$ equals $1$.
\end{theorem*}

We sketched the proof of Lemma~\ref{lemma:betti} back in 2014
(although we did not use it in the first preprint version of this
article because we thought that Rogawski's vanishing would
suffice). We thank the anonymous person who has kindly pointed out
to us that it is not a priori clear whether the fundamental groups
of the abelian covers (or of fake projective planes themselves)
which we consider, are congruence subgroups (in fact it is likely
that most of these fundamental groups are \emph{not} congruence).

In turn, the use of Kazhdan's property (T) is even more intricate
--- the fundamental group $\pi_1(S) \subset PU(2,1)$ itself never
has this property, however for $2$-adically uniformized fake
projective planes abelianization $H_1(S,\Z) =
\pi_1(S)/[\pi_1(S),\pi_1(S)]$ is isomorphic to the abelianization
$\Gamma_2/[\Gamma_2,\Gamma_2]$ of a $2$-adic lattice $\Gamma_2
\subset PGL_3(\Q_2)$, which satisfies property (T). In the
approaches of \cite{Mumford79,Ishida88,Fakhruddin15}, Kazhdan's
property \emph{still has to be used}, for there are no alternative
ways to see that the $2$-adic construction produces surfaces with
$b_2=1$.

In particular, Fakhruddin considers fake projective planes which
admit $2$-adic uniformization with a torsion-free covering group,
and after the formulation of \cite[Proposition 3.1]{Fakhruddin15}
he writes:

\begin{quote}
\emph{It seems reasonable to expect that a similar result holds
for all line bundles on all fake projective planes.}
\end{quote}

Our Lemma~\ref{lemma:regularity} and Theorem~\ref{theorem:weight3}
confirm that the claim (2) in loc. cit. is indeed \emph{true for
all fake projective planes}. We also note that part (3) there is
obvious, whereas part (1) corresponds to the wishful vanishing of
theta-characteristics, which is discussed in the second part of
our paper. Here the linearization of auxiliary line bundles on $S$
enters the game.

Recall that all fake projective planes with at least $9$
automorphisms fall into the six cases represented in Table A below
(cf. \cite{CScode} and \cite[Section 6]{GKMS}). There one denotes
by $\Pi$ the fundamental group of $S$, so that $S = \B\slash\Pi$
for the unit ball $\B\subset\C\P^2$, and $N(\Pi)$ denotes the
normalizer of $\Pi$ in $PU(2,1)$.

One of the principle observations is that the group $\Pi$ lifts to
$SU(2,1)$. The lifting produces a line bundle $\OO_S(1)\in\Pic S$
such that $\OO_S(3) := \OO_S(1)^{\otimes 3} \simeq \omega_S$ ---
the canonical sheaf of $S$. Moreover, the preimage
$\widetilde{N(\Pi)} \subset SU(2,1)$ of $N(\Pi) \subset PU(2,1)$
acts fiberwise-linearly on the total space $\text{Tot}\,\OO_S(1)
\lra S$, which provides a natural linearization for $\OO_S(1)$
(and consequently for all other $\OO_S(k)$). Furthermore, the
action of the group $\Pi\subset \widetilde{N(\Pi)}$ is trivial, so
that the graded algebra $\displaystyle\bigoplus_{k\in\Z}
H^0(S,\OO_S(k))$ is endowed with the structure of a $G$-module,
where $G := \widetilde{N(\Pi)}\slash\Pi$. In the same way one
obtains the structure of a $G$-module on
$H^0(S,\OO_S(k)\otimes\varepsilon)$ for any choice of
$G$-linearization of an $A_S$-invariant torsion line bundle
$\varepsilon \in \Pic^0S$ (cf. Section {\ref{subsection:pre-1}}
below). For $k \geq 4$, the $G$-character of the action on
$H^0(S,\OO_S(k)\otimes\varepsilon)$ can be computed via Hirzebruch
proportionality, Kodaira vanishing and Atiyah -- Bott holomorphic
Lefschetz formula (this is done in
Section~\ref{section:weight-large}).

It turns out that for $k\in\{4,5\}$ and $G = G_{21}$ or $G = H_3$
the vector spaces $H^0(S,\OO_S(k)\otimes\varepsilon)$ are direct
sums of irreducible $3$-dimensional $G$-representations.
Furthermore, multiplication by $0 \ne s_\varepsilon \in
H^0(S,\omega_S \otimes \varepsilon)$ (cf.
Theorem~\ref{theorem:weight3}) is a monomorphism of
$G$-representations, and it is easy to compute that
$H^0(S,\OO_S(2)\otimes\varepsilon) = 0$ (see
Section~\ref{section:weight2}). Note that
Theorem~\ref{theorem:weight2} and the arguments in the proof of
\cite[Proposition 3.10]{Brino-Cerbo} imply that the linear system
$|2K_S|$ gives a bicanonical embedding of $S$ (cf. Theorem 3.8 in
loc. cit). It also complements the results in \cite[Section
4]{Brino-Cerbo}. Related results are obtained by different methods
in \cite{Fakhruddin15,Keum17,Lai-Yeung,Brino-Cerbo}.

\begin{remark}
\label{proportionality} When the metric on the Hermitian line
bundle $\OO_S(1)$ is concerned one may replace $S$ with its
universal cover $\B \subset \P^2$. Then $\OO_S(1)$ can be
identified (locally) with $\OO_{\P^2}(2) \otimes \omega_{\P^2}$
(cf. Section {\ref{subsection:pre-1}} below). In other words, the
global sections of $\OO_S(1)$ are those of $\OO_{\P^2}(2)$,
``twisted by a functional determinant'' (cf. \cite[Section {\bf
14}]{Hirzebruch66}), which suggests a natural identification
$H^0(S,\OO_S(1)) = H^2(\P^2,\OO_{\P^2}(-2)) \ (= 0)$. Similar
considerations apply to $\OO_S(2)$. Furthermore, since $\OO(\pm
1)$ are cubic roots of $K$ (one for $S$ and $\P^2$) and
$K\big\vert_{\mathbb{B}}$ admits a \emph{natural} trivialization
(holomorphic volume), which we fix, one may propose a
\emph{duality} $H^i(S,\OO_S(j)) \simeq
H^{i}(\P^2,\OO_{\P^2}(-j))^*$ to hold in general, where $j \in
\Z/3$, $i \ge 0$. (The trace map is obtained from
$H^{i}(\P^2,\OO_{\P^2}(-j)) \simeq H^{2-i}(\P^2,\OO_{\P^2}(j)
\otimes \omega_{\P^2})$, restriction (local) $(\OO_{\P^2}(j)
\otimes \omega_{\P^2})\big\vert_S = \OO_S(-j) \otimes \omega_S$,
and the fact that $H^2(S,\omega_S) \simeq \C$.) More to the point,
we expect that there exists an object $\mathcal{K} \in D^b(S
\times \P^2)$ (a ``Beilinson -- Green -- Penrose kernel''), which
implements the asserted duality (cf. the discussion in Section
{\ref{subsection:int-1}}). This categorical counterpart of the
Proportionality Principle will be developed elsewhere. It would be
interesting, though, to compare our present considerations with
the \emph{twistor correspondence} between (Green) functions on
differentiable $4$-manifolds and sheaves on the respective twistor
spaces, as in e.g. \cite{LeBrun04}.
\end{remark}

\begin{table}[h]
\begin{longtable}{|c|c|c|c|c||c|c|c|c|c|c|}
\hline $l$ or $\CC$ & $p$ & $\TT_1$ & $N$ & $\#\Pi$ &
$A_S$ & $H_1(S,\Z)$ & $\Pi$ lifts? & $H_1(S\slash A_S,\Z)$ & $N(\Pi)$ lifts? \\
\hline
$\Q(\sqrt{-7})$ & $2$ & $\emptyset$ & $21$ & $3$ & $G_{21}$ & $(\Z\slash2)^4$ & yes & $\Z\slash 2$ & yes \\
& & $\{7\}$ & $21$ & $4$ & $G_{21}$ & $(\Z\slash2)^3$ & yes & $0$ & yes  \\
\hline $\CC_{20}$ & $2$ & $\emptyset$ & $21$ & $1$ &
$G_{21}$ & $(\Z\slash2)^6$ & yes & $0$ & yes \\
\hline \hline $\CC_{2}$ & $2$ & $\emptyset$ & $9$ & $6$ &
$(\Z/3\Z)^2$ & $\Z\slash14$ & yes & $\Z\slash 2$ & no  \\
& & $\{3\}$ & $9$ & $1$ &
$(\Z/3\Z)^2$ & $\Z\slash7$ & yes & $0$ & no \\
\hline $\CC_{18}$ & $3$ & $\emptyset$ & $9$ & $1$ &
$(\Z/3\Z)^2$ & $\Z\slash26\times\Z\slash2$ & yes & $0$ & no \\
\hline
\end{longtable}
\text{Table A}
\end{table}

\section{Vanishing of odd Betti numbers for abelian covers}
\label{section:betti}

\begin{lemma}
\label{lemma:betti} Let $A:N$ be an action of a finite abelian
group $A$ on a compact K\"ahler manifold $N$ of dimension $n =
\frac{\dim_{\R} N}{2}$. If the $A$-invariant forms of type $(k,k)$
satisfy $\dim H^{k,k}(N)^A = 1$, for an odd $k\leq \frac{n}2$,
then $b_k(N) = b_{k-2}(N) = b_{k-4}(N) = ... = b_1(N) = 0$.
\end{lemma}

\begin{proof}
Recall that the space $H^{l,l}(N,\R) = H^{l,l}(N,\C)\cap
H^{2l}(N,\R)$ consists of vectors in $H^{l,l}(N,\C)$, invariant
under the complex conjugation, and that $H^{l,l}(N,\C) =
H^{l,l}(N,\R)\otimes_{\R}\C$. Let $h \in H^{1,1}(N,\R)^A$ be an
$A$-invariant K\"ahler class (e.g. any $A$-averaged K\"ahler
class). Note that the class of $h^l$ is a non-zero element in
$H^{l,l}(N,\R)^A$, $0\leq l\leq n$, and the Hodge -- Lefschetz
decomposition $H^k(N,\Q)\otimes_\Q\C = \oplus_{p+q+2l=k}
H^{p,q}_{prim}(N) \wedge h^l$ is $A$-invariant. Then from the
assumption $\dim H^{k,k}(N)^A = 1$ we get $H^{l,l}(N,\R)^A = \R
h^l$ for all $l\leq k$.

\def\widebar{\overline}

Further, since $A$ is abelian, any of its complex representations
$H^{p,q}_{prim}(N,\C)$ has a non-zero eigenvector $\alpha$ iff
$H^{p,q}_{prim}\neq0$. In this case, there is a character $\chi :
A \to \C^*$ such that $a^*(\alpha) = \chi(a)\cdot\alpha$ for all
$a \in A$, and moreover $\widebar{\chi(a)}=\chi(a)^{-1}$ because
$A$ is finite. Now, as the class $h$ is real, the
$A$-representations $H^{p+q}_{prim}(N)$ are defined over $\R$, and
we have $a^*(\bar\alpha) = \widebar{\chi(a)}\cdot\bar\alpha$. This
implies that $\beta := i \alpha\wedge\bar\alpha$ belongs to
$H^{p+q,p+q}(N,\R)^A$. Hence we get $\beta = C\cdot h^{p+q}$ for
some $C\in\R$. But then $C^2 h^{2(p+q)} = \beta^2 = i
\alpha\wedge\bar\alpha \wedge i \alpha\wedge\bar\alpha =
(\alpha\wedge\alpha)\wedge(\bar\alpha\wedge\bar\alpha) = 0$ (the
latter equality follows from super-commutativity of the
wedge-product and because $p+q$ is odd). So $C=0$, $\beta = 0$,
and thus $\int i\alpha\wedge\bar\alpha\wedge h^{n-(p+q)} = 0$. The
latter vanishing and the Hodge -- Riemann relations imply that
$\alpha=0$, that is $H^k_{prim}(N)=0$, and hence $b_k(N) =
b_{k-2}(N)$. The rest follows by induction.
\end{proof}

\begin{example}
\label{example:betti} Consider the quotient $f : N \longrightarrow
N/A =: M$. The pullback $f^*$ and the trace $f_*$ give an
identification of $H^{2k}(N,\Q)^A$ with $H^{2k}(M,\Q)$. Thus the
assumption $\dim H^{k,k}(N)^A = 1$ translates into $\dim
H^{k,k}(M)=1$. The case $k=1$ can be applied when $M$ is a (fake)
projective plane or any other variety with $b_2=1$ --- it says in
particular that all \emph{smooth finite abelian} covers of $M$
(possibly ramified) have \emph{finite} first homology.
\end{example}

We do not know any straightforward way to relax the condition for
the group $A$ to be abelian. Note for instance that the
permutation group $\Sigma_{d+1}$, $d \ge 1$, acts on the Cartesian
power $E^d$ of an elliptic curve $E$, with quotient being the
projective space $\P^d$.

\section{Automorphic forms of canonical weight and rigid divisors}
\label{section:weight3}

Let $f: S' \lra S$ be the universal unramified abelian cover of a
fake projective plane $S$.
\begin{lemma}
\label{lemma:regularity} The surface $S'$ is regular (i.e. $\dim
H^1(S',\OO_{S'}) = 0$).
\end{lemma}
\begin{proof}
By Example~\ref{example:betti}, this is a particular case of
Lemma~\ref{lemma:betti} applied to $N = S'$, $A = H_1(S,\Z)$, with
$M = N/A = S$ and $H^{1,1}(S')^{A} = H^{1,1}(S) \simeq \R$.
\end{proof}

Let $\varepsilon$ be a non-trivial torsion line bundle
 on $S$.

\begin{theorem}
\label{theorem:weight3} The linear system
$|\omega_S\otimes\varepsilon|$ consists of a \emph{unique} divisor
$D_\varepsilon$. More precisely, we have
$H^1(S,\omega_S\otimes\varepsilon) =
H^2(S,\omega_S\otimes\varepsilon) = 0$ and
$H^0(S,\omega_S\otimes\varepsilon) = \C s_\varepsilon$, with
$(s_\varepsilon)_0 = D_\varepsilon$.
\end{theorem}
\begin{proof}
All abelian covers are constructed as relative spectra, $S' =
\Spec_S \,\left(\displaystyle\bigoplus_{\varepsilon\in\Pic^0
S}\varepsilon\right)$, so that $f_*\OO_{S'} =
\displaystyle\bigoplus_{\varepsilon\in\Pic^0 S}\varepsilon$. Then
Lemma~\ref{lemma:regularity} and Leray spectral sequence give
\[0 = H^1(S',\OO_{S'}) = H^1(S,f_*\OO_{S'}) =
\bigoplus_{\varepsilon\in\Pic^0 S} H^1(S,\varepsilon).\] Thus
$H^1(S,\varepsilon) = 0$ for all $\varepsilon\in\Pic^0 S$. This
vanishing combined with Serre duality
$H^k(S,\omega_S\otimes\varepsilon) = H^{2-k}(S,\varepsilon^*)^*$
yields
$$h^0(S,\omega_S\otimes\varepsilon)+h^0(S,\varepsilon^*) =
h^0(S,\omega_S\otimes\varepsilon)+h^0(S,\varepsilon^*)-h^1(S,\varepsilon^*)
= \chi(S,\varepsilon^*) = 1.$$ Finally, since
$h^0(S,\varepsilon^*) = 0$ for all torsion line bundles $\OO_S \ne
\varepsilon\in\Pic^0 S$, we deduce that
$h^0(S,\omega_S\otimes\varepsilon) = 1$.
\end{proof}

This concludes the description of all spaces of modular forms of
canonical weight. The rest of this section gathers some immediate
corollaries from the uniqueness of divisors of canonical weight.

\begin{corollary}
Define $\Theta^1_+ := \{L\in\Pic^1S \ \big\vert \ h^0(L)\neq0\}$
for $\Pic^1S := \{L\in\Pic S \ \big\vert \ c_1(L) = c_1(\OO_S(1))
= 1\}$. Then we have:
\begin{enumerate}
\item All divisors of canonical weight are irreducible $\iff$
$\Theta^1_+ = \emptyset$.
\item $\binom{|\Theta^1_+|+2}{3}\leq|H_1(S,\Z)|-1$.
\end{enumerate}
\end{corollary}

\begin{corollary}
For all line bundles $L\in\Pic S$,
$\OO_S\neq\varepsilon\in\Pic^0S$, tensor multiplication by
$s_\varepsilon$ gives a monomorphism $\otimes s_\varepsilon:
H^0(S,L) \to H^0(S,L\otimes\omega_S\otimes\varepsilon)$. Also, if
both $L$ and $\varepsilon$ happen to be $A_S$-invariant, then
$\otimes s_\varepsilon$ is a monomorphism of $G$-representations
(cf. Section {\ref{subsection:int-3}}).
\end{corollary}

\section{Tautological bundles, liftings, linearizations, central extension and localization}
\label{section:pre}

We begin by recalling the next

\begin{lemma}[{see \cite[Lemma 2.1]{GKMS}}]
\label{lemma:Kdiv} Let $S$ be a fake projective plane with no
$3$-torsion in $H_1(S,\Z)$. Then there exists a \emph{unique}
(ample) line bundle $\OO_S(1)$ such that $\omega_S\cong\OO_S(3)$.
\end{lemma}

\refstepcounter{equation}
\subsection{Linearizations of line bundles}
\label{subsection:pre-1}

Let $S$ and $\OO_S(1)$ be as in Lemma~\ref{lemma:Kdiv}. We will
assume in what follows that $A_S = G_{21}$ or $(\Z\slash 3)^2$.
This implies that one has a lifting of the fundamental group
$\Pi\subset PU(2,1)$ to $SU(2,1)$ (see Table A above). Fix such a
lifting $r: \Pi \hookrightarrow SU(2,1)$ and consider the central
extension
\begin{equation}\label{cen-ext}
    1 \to \Z/3 \to SU(2,1) \to PU(2,1) \to 1.
\end{equation}
Note that the embedding $r$ is \emph{unique} because $H_1(S,\Z) =
\Pi\slash[\Pi,\Pi]$ does not have $3$-torsion in our case.

So, we get a \emph{linear} action of $r(\Pi)$ on $\C^3$. In
particular, both $\text{Bl}_0\,\C^3 = \text{Tot}\,\OO_{\P^2}(-1)$
and its restriction to the ball $\B\subset\P^2$ are preserved by
$r(\Pi)$, so that we get the equality
\[\text{Tot}\,\OO_S(1) =
(\text{Tot}\,\OO_{\P^2}(-1)\big\vert_{\B})\slash r(\Pi)\] (cf.
\cite[8.9]{Kollar95}). Further, since there is a natural
identification
\[ \Pic^0S = \Hom(H_1(S,\Z),\C^*) = \Hom(\Pi,\C^*), \]
every torsion line bundle $\varepsilon\in\Pic^0S$ corresponds to a
character $\chi_{\varepsilon}: \Pi \to \C^*$. One may twist the
fiberwise $r(\Pi)$-action on $\text{Tot}\,\OO_{\P^2}(-1)$ by
$\chi_{\varepsilon}$ (we will refer to this modified action as
$r(\Pi)_{\chi_{\varepsilon}}$) and obtain
\[\text{Tot}\,\OO_S(1)\otimes\varepsilon = (\text{Tot}\,\OO_{\P^2}(-1)\big\vert_{\B})\slash
r(\Pi)_{\chi_{\varepsilon}}.\footnote{~All these matters are
treated extensively in \cite[\text{Ch}. 8]{Kollar95}}\]

Observe that according to Table A there always exists such
$\varepsilon\ne 0$. This table also shows that in two cases one
can choose $\varepsilon$ to be $A_S$-invariant.

\refstepcounter{equation}
\subsection{The central extension}
\label{subsection:pre-1-a}

Note that $A_S = N(\Pi)\slash\Pi$ for the normalizer $N(\Pi)
\subset PU(2,1)$ of $\Pi$. Then \eqref{cen-ext} yields a central
extension
\[1 \to \Z/3 \to G \to A_S \to 1\]
for $G := \widetilde{N(\Pi)}\slash r(\Pi)$ and the preimage
$\widetilde{N(\Pi)}\subset SU(2,1)$ of $N(\Pi)$. Further, the
preceding construction of $\OO_S(1)$ is
$\widetilde{N(\Pi)}$-equivariant by the $A_S$-invariance of
$\OO_S(1)$, which gives a \emph{linear} $G$-action on all the
spaces $H^0(S,\OO_S(k)),k\in\Z$. Similarly, if the torsion line
bundle $\varepsilon$ is $A_S$-invariant, we get a linear
$G$-action on all $H^0(S,\OO_S(k)\otimes\varepsilon)$.

Recall next that when $A_S = G_{21}$, the line bundle $\OO_S(1)$
is $A_S$-linearizable (i.e. the group $A_S$ lifts to $G$ and the
corresponding extension splits), and so the spaces
$H^0(S,\OO_S(k))$ are some linear $A_S$-representations in this
case (see Table A or \cite[Lemma 2.2]{GKMS}). The same holds for
all $H^0(S,\OO_S(k)\otimes\varepsilon)$ and any $A_S$-invariant
$\varepsilon$.

In turn, if $A_S = (\Z\slash 3)^2$, then the extension $G$ of this
$A_S$ does not split (see Table A), i.e. $G$ coincides with the
\emph{Heisenberg group} $H_3$ of order $27$. Again the $G$-action
on $H^0(S,\OO_S(k))$ (resp. on
$H^0(S,\OO_S(k)\otimes\varepsilon)$) is linear here.

\begin{remark}
\label{remark-H-3} Let $\xi,\eta\in G = H_3$ be two elements that
map to order $3$ generators of $A_S = (\Z\slash 3)^2$. Then their
commutator $[\xi,\eta]$ generates the center $\Z/3 \subset G$, and
one obtains the following irreducible $3$-dimensional
(Schr\"odinger) representation of $G$:
\[\xi: x_i \mapsto \omega^{-i} x_i, \qquad \eta: x_i \mapsto x_{i+1}\quad (i\in\Z\slash 3,\ \omega := e^{\frac{2\pi\sqrt{-1}}{3}}),\]
with $x_i$ forming a basis in $\C^3$ (this representation,
together with its complex conjugate, are the only $3$-dimensional
irreducible representations of $H_3$). Observe at this point that
according to the above discussion $[\xi,\eta]$ acts
\emph{trivially} on $S$ and via the fiberwise scaling by $\omega$
on $\OO_S(1)$. Furthermore, given any $A_S$-invariant
$\varepsilon\in\Pic^0S$, the commutator $[\xi,\eta]$ acts
\emph{non-trivially} on $\OO_S(k)\otimes\varepsilon$ whenever $k$
is coprime to $3$.
\end{remark}

\refstepcounter{equation}
\subsection{Localization}
\label{theorem:hlf}

Let $S$ be a compact complex manifold. For a holomorphic
endomorphism $\tau: S\longrightarrow S$ and a linearized coherent
sheaf $F\in\Coh^\tau S$ (i.e. a coherent sheaf $F\in\Coh S$
equipped with a morphism $\phi:\tau^*F\to F$) consider the
compositions $H^k(\phi)\circ\tau^*: H^k(S,F) \to H^k(S,F)$ of
pullbacks $\tau^*:H^k(S,F)\to H^k(S,\tau^*F)$ with
$H^k(\phi):H^k(S,\tau^*F)\to H^k(S,F)$. Define the \emph{Lefschetz
number}
\[ L(\tau,F,\phi) := \sum_k (-1)^k \Tr(H^k(\phi)\circ\tau^* : H^k(S,F)). \]

\begin{theorem}[Woods Hole formula, see {\cite[Theorem 2]{Atiyah-Bott}}]
\label{theorem:woodshole} If the graphs of $\tau$ and $Id_S$ (the
diagonal) intersect transversally in $S\times S$, then
\[ L(\tau,F,\phi) =
\sum_{f(P)=P} \frac{\Tr(\tau(P) : F(P))}{\det(1-d\tau(P),
\Omega_S(P))},
\]
where $d\tau: \tau^* \Omega_S \to \Omega_S$ is the differential
(canonical linearization of the cotangent bundle).
\end{theorem}
Explicitly, for $\dim S = 2$ we get: $\det(1-d\tau(P)) =
(1-\alpha_1(P))(1-\alpha_2(P))$, where $\alpha_j(P)$ are the
eigenvalues of the Jacobian matrix $J_\tau$.

\refstepcounter{equation}
\subsection{}
\label{subsection:pre-3}

Let us conclude this section by recalling well-known supplementary
results.

It follows from the Riemann -- Roch formula that
\[\chi(\OO_S(k)\otimes\varepsilon) = \frac{(k-1)(k-2)}{2}\]
for all $k\in\Z$ and $\varepsilon\in\Pic^0S$.


In particular, we get $H^0(S,\OO_S(k)\otimes\varepsilon) \simeq
\C^{(k-1)(k-2)/2}$ when $k\geq4$, since
\[H^i(S,\OO_S(k)\otimes\varepsilon) = H^i(S,\omega_S\otimes\OO_S(k - 3)\otimes\varepsilon) = 0\]
for all $i > 0$ by Kodaira vanishing.

\section{Automorphic forms of large weight: computation of characters}
\label{section:weight-large}

Let $\tau \in A_S$ be any element of order $3$ with a
\emph{faithful} action on $S$. Choose any linearization $\phi:
\tau^*\OO_S(1)\to\OO_S(1)$ (cf. Section {\ref{subsection:pre-1}}).

\begin{lemma}
\label{lemma:all-weights-differ} The automorphism $\tau$ has only
three fixed points, in the Woods Hole formula (see
Theoorem~\ref{theorem:woodshole}) all denominators coincide (and
are equal to $3$), and all numerators are three \emph{distinct}
$3$rd roots of unity. In other words, one can choose a numbering
of the fixed points $P_i$ and weights $\alpha_j(P_i)$ in such a
way that the following holds:
\begin{itemize}
\item $\alpha_j(P_i) = \omega^j$ for all $i,j$;
\item $w_{i} := \Tr(\tau(P_i) : \OO_S(1)_{P_i}) = \omega^{i}$ for all $i$.
\end{itemize}
Moreover, for any $\tau$-invariant line bundle
$\varepsilon\in\Pic^0S$ of order $m$ coprime to $3$, we can choose
a linearization such that $\Tr\tau|_{\varepsilon_{P_i}} = 1$. In
particular, $\Tr\tau|_{(\OO_S(k)\otimes\varepsilon^{l})_{P_i}} =
\omega^{ik}$ for all $i,k,l$. (We are using the notation
$\varepsilon^i := \varepsilon^{\otimes i}$.)
\end{lemma}

\begin{proof}[Proof]
The claim about $P_i$ and $\alpha_j(P_i)$ follows from
\cite[Proposition 3.1]{Keum08}.

We have $V := H^0(S,\OO_S(4)) \cong\C^3$ (see Section
{\ref{subsection:pre-3}}). Let $v_1, v_2, v_3$ be the eigen values
of $\tau$ acting on $V$. Then $v_i^3 = 1$ for all $i$ and
$\Tr\tau\big\vert_V = v_1 + v_2 + v_3$. At the same time, as
follows from Section {\ref{theorem:hlf}}, we have
\[\Tr\tau\big\vert_V = \frac{w_1^4 + w_2^4 + w_3^4}{(1-\omega)(1-\omega^2)} = \frac{w_1 + w_2 + w_3}{3}.\]
The latter can be equal to $v_1 + v_2 + v _3$ only when all $v_i$
(resp. all $w_i$) are pairwise distinct (so that both sums are
zero). This is due to the fact that the sum of three $3$rd roots
of unity has the norm $\in \{0,\sqrt{3},3\}$ and is zero iff all
roots are distinct.

Finally, since $(m,3) = 1$, we may replace $\varepsilon$ by
$\varepsilon^{3}$, so that the action of $\tau$ on the closed
fibers $\varepsilon_{P_i}$ is trivial. The last assertion about
$\OO_S(k)\otimes\varepsilon^{l}$ is evident.
\end{proof}

Fix some $\varepsilon\in(\Pic^0 S)^{A_S}$. Recall that the group
$G$ from Section {\ref{subsection:pre-1-a}} acts linearly on all
spaces $V := H^0(S,\OO_S(k)\otimes\varepsilon)$.

\begin{proposition}
\label{tr-is-inv} Let $G = H_3$. Then for $k\geq4$ the following
holds (cf. Table B below):
\begin{itemize}
\item for $k \equiv 0 \mod 3$, we have
    $V = V_0 \oplus \C[(\Z/3)^2]^a$ as $G$-representations, where
    $a := \displaystyle\frac{k}{6}\,(\displaystyle\frac{k}{3}-1)$
    and $V_0 \simeq \C$ (resp. $\C[(\Z\slash 3)^2]$) is the trivial
    (resp. regular) representation;
\item for $k \equiv 1 \mod 3$, we have $V = \VV_3^{\oplus
    (k-1)(k-2)/6}$ as $G$-representations, where $\VV_3$
    is an irreducible $3$-dimensional representation of $H_3$;
\item for $k \equiv 2 \mod 3$, we have $V = \VVo_3^{\oplus
    (k-1)(k-2)/6}$ as $G$-representations, where $\VVo_3$
    is the complex conjugate to $\VV_3$ above.
\end{itemize}
\end{proposition}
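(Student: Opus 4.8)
The plan is to combine a character computation via the Holomorphic Lefschetz Fixed Point Formula with the elementary representation theory of $H_3$, then to pin down which isotypic components occur by using Kodaira vanishing (so that $V=H^0$ has the Euler-characteristic dimension) together with the constraints coming from the action of the center. First I would record the dimensions: by {\ref{subsection:pre-3}} we have $\dim V = (k-1)(k-2)/2$ for $k\ge 4$, and all higher cohomology vanishes. Next, I would apply Lefschetz to the two ``Schr\"odinger generators'' $\xi,\eta\in G=H_3$ (and their powers), each of which maps to an order-$3$ element of $A_S=(\Z/3)^2$ acting faithfully on $S$; Lemma~\ref{all-weights-differ} then tells us that each such automorphism has exactly three fixed points with tangent weights $\omega,\omega^2$ and that $\mathrm{Tr}\,\tau|_{(\OO_S(k)\otimes\varepsilon)_{P_i}}=\omega^{ik}$ after choosing the linearization there. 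Plugging this in gives
\[
\mathrm{Tr}\,\tau\big|_V \;=\; \frac{\omega^{0\cdot k}+\omega^{1\cdot k}+\omega^{2\cdot k}}{(1-\omega)(1-\omega^2)}\;=\;\frac{1+\omega^k+\omega^{2k}}{3},
\]
which is $1$ if $3\mid k$ and $0$ otherwise. The same computation applies to every element of $G$ that projects to a nonzero element of $A_S$ (up to relabelling the fixed points), so all such elements have trace $0$ on $V$ when $3\nmid k$ and trace $1$ when $3\mid k$.

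Now I would feed these traces into the character theory of $H_3$. The irreducible representations of $H_3$ are the nine one-dimensional characters (which factor through $A_S=(\Z/3)^2$) and the two three-dimensional ones $\V_3,\ov{\V}_3$, on which the center $\mu_3$ acts by $\omega$ and $\omega^2$ respectively. By Remark~\ref{remark-H-3}, the central generator $[\xi,\eta]$ acts on $\OO_S(k)\otimes\varepsilon$ by scaling by $\omega^k$; hence if $k\equiv 1\ (3)$ the center acts by $\omega$, forcing $V$ to be a sum of copies of $\V_3$ only, and if $k\equiv 2\ (3)$ it acts by $\omega^2$, forcing $V$ to be a sum of copies of $\ov{\V}_3$ only. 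The dimension count $\dim V=(k-1)(k-2)/2$ (which is divisible by $3$ in these residue classes) then gives exactly $(k-1)(k-2)/6$ copies, proving the second and third bullets. For $k\equiv 0\ (3)$ the center acts trivially, so $V$ is a representation of $A_S=(\Z/3)^2$; here I would use that the character of $V$ vanishes on all $8$ nonidentity elements of $A_S$ and equals $(k-1)(k-2)/2$ on the identity. Writing $V=V_0\oplus(\text{multiple of the regular representation})$ is then forced: subtracting copies of $\C[(\Z/3)^2]$ (whose character is $9$ at $1$ and $0$ elsewhere) leaves a representation whose character is constant, hence a multiple of the trivial character; matching $(k-1)(k-2)/2 = 9a + \dim V_0$ with $a=\frac{k}{6}(\frac{k}{3}-1)$ one checks $\dim V_0=1$, so $V_0\simeq\C$ is trivial. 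This establishes the first bullet.

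The one genuine subtlety — and the step I expect to be the main obstacle — is the bookkeeping of \emph{which} central character appears, i.e.\ making Remark~\ref{remark-H-3} interact correctly with the Lefschetz normalization of Lemma~\ref{all-weights-differ}. Lemma~\ref{all-weights-differ} fixes a linearization of $\tau=\xi$ (and separately of $\eta$) at the fixed points, but these local choices must be globally compatible with the $G$-linearization on $H^0$; the resolution is that the center $[\xi,\eta]\in\mu_3$ acts by an honest scalar $\omega^k$ by the very construction of $\OO_S(1)$ from the lift to $SU(2,1)$ (the $\mu_3\subset SU(2,1)$ acts fiberwise by $\omega$, cf.\ {\ref{subsection:pre-1}} and Remark~\ref{remark-H-3}), and this scalar is independent of any linearization choice. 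Once that is in hand, the decomposition is rigid because an $H_3$-module is determined by the central character together with its dimension as soon as we know it has no $1$-dimensional constituents (for $k\not\equiv 0$) — and the $k\equiv 0$ case is handled purely by the vanishing of the $A_S$-character off the identity. A secondary point to be careful about is that the ``$a$'' in the statement, $a=\frac{k}{6}(\frac{k}{3}-1)$, must be shown to be a non-negative integer for $k\ge 6$ divisible by $3$ (it is: write $k=3m$, then $a=\binom{m}{2}$), and that for $k=3$ itself, outside the range $k\ge 4$, the formula is not claimed; I would simply note $k\ge 4$ and $3\mid k$ forces $k\ge 6$, so $a=\binom{k/3}{2}\ge 1$.

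\begin{remark}
An alternative route to the $k\not\equiv 0$ cases avoids Lefschetz entirely: since $k$ is coprime to $3$, the center of $H_3$ acts on $V$ by the nontrivial scalar $\omega^k$, so $V$ contains no one-dimensional constituent, hence is a sum of copies of $\V_3$ or of $\ov{\V}_3$ according to the central character, and the count follows from $\dim V=(k-1)(k-2)/2$. The Lefschetz computation is, however, needed to rule out one-dimensional constituents \emph{other than the trivial one} in the case $3\mid k$, and it is reassuring to have it as a consistency check in all cases.
\end{remark}
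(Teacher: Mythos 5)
Your proposal follows essentially the same route as the paper's proof: the Lefschetz computation via Lemma~\ref{all-weights-differ} (trace $1$ on $V$ for non-central elements when $3\mid k$, trace $0$ otherwise), the central character $\omega^k$ of $[\xi,\eta]$ from Remark~\ref{remark-H-3} to force $\mathcal{V}_3$ resp.\ $\overline{\mathcal{V}}_3$ when $3\nmid k$, and the dimension count from {\ref{subsection:pre-3}}. One slip to correct: in the $k\equiv 0 \pmod 3$ case you assert that the character of $V$ vanishes on all eight non-identity elements of $A_S$, which contradicts your own Lefschetz computation (and the paper), giving the value $1$ there; with value $0$ the step would actually fail (e.g.\ for $k=6$ one would need $\dim V=10$ to be divisible by $9$), whereas with the correct value $1$ subtracting $a$ copies of $\C[(\Z/3)^2]$ leaves the constant character $1$, so $V_0\simeq\C$ and $(k-1)(k-2)/2=9a+1$ checks out exactly as you state. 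Apart from this internal inconsistency, easily repaired from your own first paragraph, the argument is correct and coincides with the paper's.
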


\begin{table}[h]
\begin{longtable}{|c|c|c|c|}
\hline
$4 \le k \mod 3$ & $0$ & $1$ & $2$ \\
\hline $H^0(S,\OO_S(k)\otimes\varepsilon)$ & $V_0
\oplus\C[(\Z/3)^2]^a$ & $\VV_3^{\oplus(k-1)(k-2)/6}$ &
$\VVo_3^{\oplus (k-1)(k-2)/6}$ \\
\hline
\end{longtable}
\text{Table B}
\end{table}

\begin{proof}[Proof]
Suppose that $k \equiv 0 \mod 3$. Then, since every element in
$H_3$ has order $3$, applying Lemma~\ref{lemma:all-weights-differ}
to any non-central $\tau \in G$ we obtain $(1 - \alpha_1(P_i))(1 -
\alpha_2(P_i)) = 3$ for all $i$ and $\Tr\tau\big|_{V} = 1$ (cf.
Section {\ref{theorem:hlf}}).

Further, the element $[\xi,\eta] \in G$ from
Remark~\ref{remark-H-3} acts trivially on $\OO_S(k)$ (via scaling
by $\omega^k = 1$). Also, since the order of any $\varepsilon \ne
0$ is coprime to $3$ (see Table A) and $\varepsilon$ is flat (see
its construction in Section {\ref{subsection:pre-1}}), from
Lemma~\ref{lemma:all-weights-differ} we find that $[\xi,\eta]$
acts trivially on $\OO_S(k)\otimes\varepsilon$, hence on $V$ as
well. This implies that the $G$-action on $V$ factors through that
of its quotient $(\Z/3)^2$. Then the claimed decomposition $V =
V_0 \oplus \C[(\Z/3)^2]^a$ follows from the fact that $1 + 9a =
\dim V = \Tr[\xi,\eta]\big|_{V}$ and that $\Tr\tau\big|_{V} = 1$
for all non-central $\tau \in G$.

Let now $k \equiv 1 \mod 3$ (resp. $k \equiv 2 \mod 3$). Then it
follows from Remark~\ref{remark-H-3} that $[\xi,\eta]$ scales all
vectors in $V = H^0(S,\OO_S(k)\otimes\varepsilon)
\cong\C^{(k-1)(k-2)/2}$ by $\omega^k \ne 1$. Furthermore, since
$\Tr\xi\big|_{V} = 0 = \Tr\eta\big|_{V}$ by
Lemma~\ref{lemma:all-weights-differ} and Section
{\ref{theorem:hlf}}, all irreducible summands of $V$ are faithful
$G$-representations, and hence they are isomorphic to $\VV_3$
(resp. to $\VVo_3$). This concludes the proof.
\end{proof}

\begin{proposition}
\label{tr-is-inv-21} Let $G \supset A_S =  G_{21}$. Then for
$k\geq4$ and $V = H^0(S,\OO_S(k)\otimes\varepsilon)$, we have the
following equality of (virtual) $G$-representations
\[
V = \C[G]^{\oplus a_k} \oplus U_k
\]
for some $a_k \in \Z$ expressed in terms of $\dim V$, where $U_k$
depends only on $k \mod 21$ and is explicitly given in the table
below, with rows (resp. columns) being enumerated by $k \mod 3$
(resp. $k \mod 7$)

\begin{center}

\begin{tabular}{|c|c|c|c|c|c|c|c|}
\hline
& $0$ & $1$ & $2$ & $3$ & $4$ & $5$ & $6$ \\

\hline $0$ &    $\C$              & $\VV_3 \oplus \VVo_3 \oplus
\C$ & $\VV_3 \oplus \VVo_3 \oplus \C$ & $\C$ & $\VV_3^{\oplus 2}
\oplus \VVo_3 \oplus \C$ & $(\VV_3 \oplus \VVo_3)^{\oplus 2}
\oplus \C$ & $\VV_3 \oplus
\VVo_3^{\oplus 2} \oplus \C$ \\
\hline $1 \ \text{or} \ 2$  & $(-\VV_3) \oplus (-\VVo_3)$ & $0$ &
$0$ & $(-\VV_3) \oplus (-\VVo_3)$ & $\VVo_3$ & $\VV_3 \oplus
\VVo_3$ & $\VV_3$ \\
\hline
\end{tabular}
\end{center}
\end{proposition}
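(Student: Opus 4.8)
The plan is to mimic the structure of the proof of Proposition~\ref{tr-is-inv}, but now exploiting both the order-$3$ and order-$7$ elements of $A_S = G_{21}$ together with the fact that $\OO_S(1)$ is genuinely $A_S$-linearizable (so $H^0(S,\OO_S(k)\otimes\varepsilon)$ is an honest $G_{21}$-representation, with $G = A_S$ after passing to a suitable linearization). First I would recall the character table and the subgroup structure of $G_{21} = \Z/7 \rtimes \Z/3$: it has five conjugacy classes, hence five irreducibles, namely the trivial $\C$, two other one-dimensional characters factoring through $\Z/3$, and the two $3$-dimensional representations $\mathcal{V}_3$, $\overline{\mathcal{V}}_3$ (each induced from a nontrivial character of $\Z/7$). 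The regular representation is $\C[G_{21}] = \C \oplus \C_\omega \oplus \C_{\omega^2} \oplus \mathcal{V}_3^{\oplus 3} \oplus \overline{\mathcal{V}}_3^{\oplus 3}$; the point of subtracting off copies of $\C[G]$ is precisely to reduce everything to the finite data $k \bmod 21$.

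Next I would compute, for a generator $\sigma$ of the $\Z/7$ factor, the trace $\mathrm{Tr}\,\sigma|_V$ via the Holomorphic Lefschetz Fixed Point Formula. As with the order-$3$ case one invokes \cite[Proposition 3.1]{Keu} (or the analogue) to pin down the fixed points of $\sigma$ and the tangent weights; since $|\sigma| = 7$ is coprime to $3$, one can normalize the linearization of $\OO_S(1)$ so that the fibre weights $\mathrm{Tr}\,\sigma|_{\OO_S(1)_{P_i}}$ are $7$th roots of unity determined up to the overall numbering, and similarly arrange $\mathrm{Tr}\,\sigma|_{\varepsilon_{P_i}} = 1$ (replacing $\varepsilon$ by a power coprime to $7$). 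Feeding this into the fixed point formula, together with $\mathrm{Tr}\,\sigma|_V = \chi(\OO_S(k)\otimes\varepsilon) - (\text{higher cohomology, which vanishes for }k\ge 4)$, gives an explicit value of $\mathrm{Tr}\,\sigma|_V$ as a function of $k\bmod 7$. Combined with the order-$3$ computation already carried out in Lemma~\ref{all-weights-differ} (which yields $\mathrm{Tr}\,\tau|_V = \omega^{ik}/3$-type expressions summing to $1$ or to sums of roots of unity, hence forces specific small values), and with the global dimension $\dim V = (k-1)(k-2)/2$, one has enough linear equations on the five multiplicities $m_{\C}, m_{\C_\omega}, m_{\C_{\omega^2}}, m_{\mathcal{V}_3}, m_{\overline{\mathcal{V}}_3}$ to solve for them uniquely. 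Subtracting the largest possible number $a_k$ of copies of $\C[G]$ leaves the residual $U_k$, and one then checks case by case (the $21$ residues, collapsed by the stated symmetry to the two-row table) that the answer matches the table; the ``virtual'' entries $(-\mathcal{V}_3)\oplus(-\overline{\mathcal{V}}_3)$ in the $k\equiv 1,2 \bmod 3$, $k\equiv 0,3\bmod 7$ boxes simply record that there the honest decomposition needs one \emph{fewer} than $a_k$ copies of $\C[G]$, i.e.\ the formula $V = \C[G]^{\oplus a_k}\oplus U_k$ holds with $U_k$ a virtual representation.

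The main obstacle I anticipate is bookkeeping rather than conceptual: getting the fixed-point data for the order-$7$ automorphism correct and consistent with the chosen linearization of $\OO_S(1)$, so that the numerators $\mathrm{Tr}\,\sigma|_{(\OO_S(k)\otimes\varepsilon)_{P_i}}$ are unambiguously pinned down (up to a cyclic relabelling of the $P_i$) and the resulting trace is a genuine function of $k\bmod 7$ alone. One subtlety is that, unlike the order-$3$ case where the argument with $H^0(S,\OO_S(4))\cong\C^3$ forced the tangent and fibre weights to be the three distinct cube roots of unity, here one must argue separately (e.g.\ again via Keum's analysis of fixed loci on fake projective planes, or via the constraint that $\mathrm{Tr}\,\sigma|_{H^0(S,\OO_S(k))}$ be an algebraic integer of the right norm for several small $k$) that the three fibre weights of $\sigma$ are related by the primitive $\Z/7$-action in the expected way. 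Once that normalization is fixed, the remaining steps are the routine linear algebra of solving for the five multiplicities and the routine enumeration of residues, which I would not carry out in detail here beyond exhibiting one representative case in each row of the table.
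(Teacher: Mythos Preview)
Your proposal is correct and follows essentially the same approach as the paper: compute $\mathrm{Tr}\,\sigma|_V$ for an order-$7$ element via the Holomorphic Lefschetz Fixed Point Formula (obtaining a function of $k\bmod 7$), combine with the order-$3$ trace from Lemma~\ref{all-weights-differ} and the dimension $(k-1)(k-2)/2$, and read off the decomposition against the $G_{21}$ character table. The paper is terser than you are about the step you flag as the main obstacle --- it simply writes down the explicit fixed-point contributions $\zeta^{6k}/((1-\zeta)(1-\zeta^3)) + \zeta^{5k}/((1-\zeta^2)(1-\zeta^6)) + \zeta^{3k}/((1-\zeta^4)(1-\zeta^5))$ without separately justifying the tangent and fibre weights for $\sigma$ --- so your caution about normalizing the order-$7$ linearization data is well placed but does not reflect any divergence in strategy.
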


\begin{proof}[Proof]
From Section {\ref{theorem:hlf}} we obtain
\[\Tr\sigma\big|_V = \frac{\zeta^{6k}}{(1-\zeta)(1-\zeta^3)} + \frac{\zeta^{5k}}{(1-\zeta^2)(1-\zeta^6)} + \frac{\zeta^{3k}}{(1-\zeta^4)(1-\zeta^5)}.\]
Here $\zeta := e^{\frac{2\pi\sqrt{-1}}{7}}$ and $\sigma\in G_{21}$
is an element of order $7$. The value $\Tr\sigma\big|_V$ depends
only on $k \mod 7$ and by the direct computation we obtain the
following table:

\begin{longtable}{|c|c|c|c|c|c|c|c|c|}
\hline
$k$ & $0$ & $1$ & $2$ & $3$ & $4$ & $5$ & $6$ \\
\hline $\Tr\sigma\big|_V$ & $1$ & $0$ & $0$ & $1$ & $\bar{b}$ &
$-1$ &
$b$ \\
\hline
\end{longtable}
(Here $b := \zeta + \zeta^2 + \zeta^4$ and $\bar{b} = -1 - b =
\zeta^3 + \zeta^5 + \zeta^6$ are Gauss sums
$\frac{-1\pm\sqrt{-7}}2$.)

Let $\tau \in G_{21}$ be an element of order $3$ such that $G_{21}
= \left<\sigma,\tau\right>$. Recall the character table for the
group $G_{21}$ (see e.g. the proof of \cite[Lemma 4.2]{GKMS}):


\begin{center}
\begin{tabular}{|c|c|c|c|c|c|}
\hline
& $1$ & $\Tr\sigma$ & $\Tr\sigma^3$ & $\Tr\tau$ & $\Tr\tau^2$ \\
\hline
$\C$ &                    $1$ & $1$ & $1$ & $1$ & $1$ \\
\hline
$\VV_1$  &                $1$ & $1$ & $1$ & $\omega$ & $\overline{\omega}$ \\
\hline
$\VVo_1$ &    $1$ & $1$ & $1$ & $\overline{\omega}$ & $\omega$ \\
\hline
$\VV_3$ &                 $3$ & $b$ & $\overline{b}$ & $0$ & $0$ \\
\hline
 $\VVo_3$ &  $3$ & $\overline{b}$ & $b$ & $0$ & $0$ \\
\hline
\end{tabular}
\end{center}
Here ``---'' signifies, as usual, the complex conjugation and
$\VV_i$ are irreducible $i$-dimensional representations of
$G_{21}$.

Now from Lemma~\ref{lemma:all-weights-differ} and the preceding
tables we get the claimed options for $V$. This concludes the
proof.
\end{proof}

\section{Automorphic forms of small weight: two proofs of vanishing}
\label{section:weight2}

\begin{lemma}[{see \cite[Lemma 15.\,6.\,2]{Kollar95}}]
\label{lemma:kollar} Let $M_i\subset H^0(X,L_i)$ be non-zero
linear systems on a proper normal variety $X$. Then the image of
the natural linear map $p: \bigotimes_i M_i \to H^0(X,\bigotimes_i
L_i)$ has dimension at least $1 + \sum_i (\dim M_i-1)$. In
particular, $\dim M_i\leq\frac{h^0(X,L_i^{\otimes 2})+1}{2}$.
\end{lemma}
\begin{proof}
A divisor $D\in\P H^0(X,\bigotimes_i L_i)$ has bounded number of
irreducible components, so it can be written as a sum of divisors
from $\P M_i$ in finitely many ways, thus the natural map from the
Segre variety $\prod_i \P M_1$ to $\P H^0(X,\bigotimes_i L_i)$ has
finite fibers, which implies that the dimension of the
projectivization of the image of $p$ is not less than $\dim
\prod_i \P(M_i) = \sum_i (\dim M_i - 1)$.
\end{proof}

\begin{lemma}
\label{lemma:remake} Let $L\in\Pic^G X$ be a $G$-linearized
holomorphic line bundle on a compact complex variety $X$ equipped
with an action of a finite group $G:X$ (not necessarily faithful).
Assume that $H^0(X,L^{\otimes 2})$ is a $G$-representation of
dimension at most $4$ that does not contain $1$-dimensional
subrepresentations. If a vector space $H^0(X,L)$ is non-zero, then
it is an irreducible $2$-dimensional $G$-representation and the
order of $G$ is even.
\end{lemma}
\begin{proof}
Lemma~\ref{lemma:kollar} implies that $h^0(X,L) \leq 2$. Thus we
must exclude the case when $H^0(X,L)$ contains a $1$-dimensional
subrepresentation $M$. But then by Lemma~\ref{lemma:kollar} a
$1$-dimensional representation $M^{\otimes 2}$ embeds into
$H^0(X,L^{\otimes 2})$, contradiction. Finally observe that
irreducible representations of odd order groups have odd degree.
\end{proof}

We retain the earlier notation.
\begin{theorem}
\label{theorem:weight2} Let $S$ be a fake projective plane with
the automorphism group $A_S$ of order at least $9$ and
$\varepsilon \in (\Pic S)^{A_S}$ be an $A_S$-invariant torsion
line bundle. Then $H^0(S,\OO_S(2)\otimes\varepsilon) = 0$.
\end{theorem}

\begin{proof}[The old proof]
\label{old_proof} Propositions~\ref{tr-is-inv} and
\ref{tr-is-inv-21} imply that $H^0(S,\OO_S(4)\otimes\varepsilon)$
is an irreducible $3$-dimensional $G$-representation.
Lemma~\ref{lemma:remake} implies that
$H^0(S,\OO_S(2)\otimes\varepsilon) = 0$, exactly as in the proof
of \cite[Theorem 1.3]{GKMS}.
\end{proof}

\emph{The new proof} comes with the extra assumption
$\varepsilon\ne 0$ (cf. the end of Section
{\ref{subsection:pre-1}}).

Suppose that $H^0(S,\OO_S(2) \otimes \varepsilon)\ne0$. There is a
natural homomorphism of $G$-modules
\[H^0(S,\omega_S\otimes\varepsilon^*) \otimes H^0(S,\OO_S(2)\otimes
\varepsilon) \to H^0(S,\OO_S(5))\] and
Theorem~\ref{theorem:weight3} implies that $H^0(S,\OO_S(2)\otimes
\varepsilon)$ is a \emph{non-trivial} subrepresentation in
$H^0(S,\OO_S(5))$ of dimension $\le 2$. But the latter contradicts
Proposition~\ref{tr-is-inv} (for $A_S=(\Z\slash3)^2$) or
Proposition~\ref{tr-is-inv-21} (for $A_S = G_{21}$). Thus
Theorem~\ref{theorem:weight2} follows in this case.

Similarly, the $G$-homomorphism
\[H^0(S,\omega_S\otimes\varepsilon) \otimes H^0(S,\OO_S(2)) \to H^0(S,\OO_S(5)\otimes\varepsilon)\]
gives contradiction with either Proposition~\ref{tr-is-inv} or
Proposition~\ref{tr-is-inv-21}, provided that $H^0(S,\OO_S(2))\ne
0$. This concludes the new proof. \qed

\bibliographystyle{amsplain}

\begin{thebibliography}{41}

\bibitem{ABKW}
Klaus Altmann, Jaros{\l}aw Buczy{\'n}ski, Lars Kastner, Anna-Lena
Winz: \emph{Immaculate line bundles on toric varieties}, Pure and
Applied Mathematics Quarterly, \textbf{16}:4 (2020), 1147 -- 1217.

\bibitem{Atiyah-Bott}
Michael Atiyah, Raoul Bott: \emph{A Lefschetz fixed point formula
for elliptic differential operators}, Bull. Amer. Math. Soc.,
\textbf{72} (1966), 245 -- 250.

\bibitem{Barlow}
Rebecca Barlow: \emph{Zero-cycles on Mumford's surface},
Mathematical Proceedings of the Cambridge Philosophical Society,
\textbf{126} (1999), no. 3, 505 -- 510.
\doi{10.1017/S0305004198003442}

\bibitem{Beilinson78}
Alexander Beilinson: \emph{Coherent sheaves on $\P^n$ and problems
in linear algebra}, Funktsional. Anal. i Prilozhen., \textbf{12}
(1978), no. 3, 68 -- 69; Funct. Anal. Appl., \textbf{12} (1978),
no. 3, 214 -- 216.

\bibitem{Blasius-Rogawski}
Don Blasius, Jonathan Rogawski: \emph{Cohomology of congruence
subgroups of $SU(2,1)^p$ and Hodge cycles on some special complex
hyperbolic surfaces}, In: Regulators in Analysis, Geometry and
Number Theory (Eds. A. Resnikov and N. Schappacher), Progress In
Mathematics 171, Birkhauser 1999.

\bibitem{Bloch80}
Spencer Bloch: \emph{Lectures on algebraic cycles}, Duke
University Mathematics Series. IV (1980). Durham, North Carolina:
Duke University, Mathematics Department.

\bibitem{Borisov-Fatighenti}
Lev Borisov, Enrico Fatighenti: \emph{New explicit constructions
of surfaces of general type}, \arxiv{2004.02637}.

\bibitem{BBF}
Lev Borisov, Anders Buch, Enrico Fatighenti: \emph{A journey from
the octonionic $\P^2$ to a fake $\P^2$}, \arxiv{2008.09731}.

\bibitem{Borisov-Keum}
Lev Borisov, Jonghae Keum: \emph{Explicit equations of a fake
projective plane}, Duke Math. J.  \textbf{169} (2020), no. 6, 1135
-- 1162. \doi{10.1215/00127094-2019-0076}

\bibitem{Brino-Cerbo}
Gennaro Di Brino, Luca Di Cerbo: \emph{Exceptional collections and
the bicanonical map of Keum's fake projective planes}, Commun.
Contemp. Math. \textbf{20} (2018), no. 1, 1650066, 13 pp. Also
\arxiv{1603.04378}.

\bibitem{Cartwright-Steger}
Donald Cartwright, Tim Steger: \emph{Enumeration of the $50$ fake
projective planes}, C. R. Acad. Sci. Paris, Ser. I, \textbf{348}
(2010), 11 -- 13. \doi{10.1016/j.crma.2009.11.016}

\bibitem{CScode}
Donald Cartwright, Tim Steger:
\url{http://www.maths.usyd.edu.au/u/donaldc/fakeprojectiveplanes/}

\bibitem{Fakhruddin15}
Najmuddin Fakhruddin: \emph{Exceptional collections on
$2$-adically uniformised fake projective planes}, Mathematical
Research Letters, \textbf{22} (2015), no. 1, 43 -- 57. Also
\arxiv{1310.3020}.

\bibitem{Galkin-Iritani}
Sergey Galkin, Hiroshi Iritani: \emph{Gamma conjecture via mirror
symmetry}, Advanced Studies in Pure Mathematics: Primitive forms
and related subjects, \textbf{83} (2019), 53 -- 115.
\doi{10.2969/aspm/08310055} Also \arxiv{1508.00719}.

\bibitem{GKMS}
Sergey Galkin, Ludmil Katzarkov, Anton Mellit, Evgeny Shinder:
\emph{Derived categories of Keum's fake projective planes},
Advances in Mathematics, \textbf{278} (2015), 238 -- 253.
\doi{10.1016/j.aim.2015.03.001}

\bibitem{GKMS-mp}
Sergey Galkin, Ludmil Katzarkov, Anton Mellit, Evgeny Shinder:
\emph{Minifolds and phantoms}, \arxiv{1305.4549}.

\bibitem{HMT}
Claus Hertling, Yuri Manin, Constantin Teleman: \emph{An update on
semisimple quantum cohomology and F-manifolds}, Proceedings of the
Steklov Institute of Mathematics, \textbf{264} (2009), no. 1, 62
-- 69. Also \arxiv{0803.2769}.

\bibitem{Hirzebruch58}
Friedrich Hirzebruch: \emph{Automorphe {F}ormen und der {S}atz von
{R}iemann -- {R}och}, Symposium internacional de topolog\'{\i}a
algebraica ({I}nternational symposium on algebraic topology):
Universidad Nacional Aut\'{o}noma de M\'{e}xico and UNESCO, Mexico
City, (1958), 129 -- 144.

\bibitem{Hirzebruch66}
Friedrich Hirzebruch: \emph{Elliptische Differentialoperatoren auf
Mannigfaltigkeiten}, Festschr. Ged\"achtnisfeier K. Weierstrass,
Westdeutscher Verlag, Cologne, (1966), 583 -- 608.
Uspehi Mat. Nauk, \textbf{23} (1968), no. 1 (139), 191 -- 209.

\bibitem{Ise64}
Mikio Ise: \emph{Generalized automorphic forms and certain
holomorphic vector bundles}, Amer. J. Math., \textbf{86} (1964),
no. 1, 70 -- 108. \doi{10.2307/2373036}

\bibitem{Ishida88}
Masa-Nori Ishida: \emph{An elliptic surface covered by Mumford's
fake projective plane}, Tohoku Math. J., \textbf{40} (1988), no.
3, 367 -- 396.

\bibitem{Kato-Ochiai}
Fumiharu Kato, Hiroyuki Ochiai: \emph{Arithmetic structure of CMSZ
fake projective planes}, J. Algebra, \textbf{305} (2006), no. 2,
1166 -- 1185. Also \arxiv{math/0006223}.

\bibitem{Kazhdan67}
David A. Kazhdan: \emph{Connection of the dual space of a group
with the structure of its close subgroups}, Funktsional. Anal. i
Prilozhen., \textbf{1} (1967), no. 1, 71 -- 74; Funct. Anal.
Appl., \textbf{1} (1967), no. 1, 63 -- 65.

\bibitem{Keum08}
Jonghae Keum: \emph{Quotients of fake projective planes}, Geom.
Topol., \textbf{12} (2008), no. 4, 2497 -- 2515. Also
\arxiv{0802.3435}.

\bibitem{Keum17}
Jonghae Keum: \emph{A vanishing theorem on fake projective planes
with enough automorphisms}, Trans. Amer. Math. Soc., \textbf{369}
(2017), no. 10, 7067 -- 7083. Also \arxiv{1407.7632v3}.

\bibitem{Kharlamov-Kulikov14}
Vyacheslav Kharlamov, Viktor Kulikov: \emph{On numerically
pluricanonical cyclic coverings}, Izv. RAN. Ser. Mat., \textbf{78}
(2014), no. 5, 143 -- 166; Izv. Math., \textbf{78} (2014), no. 5,
986 -- 1005. Also \arxiv{1308.0516}.

\bibitem{Klingler03}
Bruno Klingler: \emph{Sur la rigidit\'e de certains groupes
fonndamentaux, l'arithm\'eticit\'e des r\'eseaux hyperboliques
complexes, et les `faux plans projectifs'}, Invent. Math.,
\textbf{153} (2003), 105 -- 143. \doi{10.1007/s00222-002-0283-2}

\bibitem{Kobayashi-Ono}
Toshiyuki Kobayashi, Kaoru Ono: \emph{Note on Hirzebruch's
proportionality principle}, Jour. Fac. Sci. Univ. Tokyo,
\textbf{37} (1990), no. 1, 71 -- 87.

\bibitem{Kollar95}
J\'anos Koll\'ar: \emph{Shafarevich maps and automorphic forms},
M. B. Porter Lectures, Princeton Univ. Press, Princeton, NJ, 1995.

\bibitem{Lai-Yeung}
Ching-Jui Lai, Sai-Kee Yeung: \emph{Exceptional collection of
objects on some fake projective planes}, available at
\url{http://www.math.purdue.edu/~yeung/papers/FPP-van.pdf}.

\bibitem{Langlands63}
Robert Langlands: \emph{The Dimension of Spaces of Automorphic
Forms}, Amer. J. Math., \textbf{85} (1963), no. 1, 99 -- 125.
\doi{10.2307/2373189}

\bibitem{LeBrun04}
Claude LeBrun: \emph{Curvature functionals, optimal metrics, and
the differential topology of 4-manifolds}, Different faces of
geometry, 199 -- 256, Int. Math. Ser. (N.Y.), 3, Kluwer/Plenum,
New York, 2004. \doi{10.1007/b115003}

\bibitem{Mumford79}
David Mumford: \emph{An algebraic surface with $K$ ample,
$K^2=9$,$p_g=q=0$}, Amer. J. Math., \textbf{101} (1979), no. 1,
233 -- 244. \doi{10.2307/2373947}

\bibitem{Prasad-Yeung}
Gopal Prasad, Sai-Kee Yeung: \emph{Fake projective planes},
Invent. Math., \textbf{168} (2007), 321 -- 370; \textbf{182}
(2010), 213 -- 227. Also \arxiv{math/0512115v5},
\arxiv{math/0906.4932v3}.

\bibitem{Rogawski90}
Jonathan Rogawski: \emph{Automorphic Representations of Unitary
Groups in Three Variables}, Annals of Math. Studies: Princeton
University Press, \textbf{123} (1990).

\bibitem{Stover13}
Matthew Stover: \emph{ERRATUM AND ADDENDUM: ``PROPERTY (FA) AND
LATTICES IN SU(2,1)''}, International Journal of Algebra and
Computation, \textbf{23} (2013), no. 7, 1783 -- 1787.
\doi{10.1142/S0218196713920033}

\bibitem{Weissauer83}
Rained Weissauer: \emph{Vektorwertige Siegelsche Modulformen
kleinen Gewichtes}, J. Reine Angew. Math., \textbf{343} (1983),
184 -- 202.

\bibitem{Yau77}
Shing-Tung Yau: \emph{Calabi's conjecture and some new results in
algebraic geometry}, Proceedings of the National Academy of
Sciences, \textbf{74} (1977), no. 5, 1798 -- 1799.
\doi{10.1073/pnas.74.5.1798}

\bibitem{Yeung04}
Sai-Kee Yeung: \emph{Integrality and arithmeticity of co-compact
lattices corresponding to certain complex two-ball quotients of
Picard number one}, Asian J. Math., \textbf{8} (2004), 107 -- 130.

\end{thebibliography}

\newpage

\end{document}